 \newtheorem{theorem}{Theorem}[section]
 \newtheorem{corollary}[theorem]{Corollary}
 \newtheorem{lemma}[theorem]{Lemma}
 \theoremstyle{definition}
 \newtheorem{definition}[theorem]{Definition}
 \theoremstyle{remark}
 \numberwithin{equation}{section}
\begin{document}

%
\setlength{\parindent}{0cm}
\renewcommand{\labelenumi} {(\alph{enumi})}    
\renewcommand{\labelenumii}{(\roman{enumii})}
\renewcommand{\theenumi} {(\alph{enumi})}      
\renewcommand{\theenumii}{(\roman{enumii})}    
  \renewcommand {\Re}{\text{Re}}
  \renewcommand {\Im}{\text{Im}}
  \newcommand   {\upi}{{\mathrm \pi}}
  \newcommand   {\ud} {{\mathrm d}}
  \newcommand   {\dr}{{\mathrm d}r}
  \newcommand   {\ds}{{\mathrm d}s}
  \newcommand   {\dt}{{\mathrm d}t}
  \newcommand   {\dx}{{\mathrm d}x}
  \newcommand   {\dxi}{{\mathrm d}\xi}
  \newcommand   {\dy}{{\mathrm d}y}
  \newcommand   {\dz}{{\mathrm d}z}
  \newcommand*{\longhookrightarrow}{\ensuremath{\lhook\joinrel\relbar\joinrel\relbar\joinrel\rightarrow}}

  \newcommand {\pihalbe}{\nicefrac{\upi}{2}}
  \newcommand {\einhalb}{{\nicefrac{1}{2}}}
  \newcommand {\CC}{\mathbb C}  
  \newcommand {\RR}{\mathbb R}
  \newcommand {\KK}{\mathbb K}  
  \newcommand {\NN}{\mathbb N}
  \newcommand {\ZZ}{\mathbb Z}

  \newcommand {\BOUNDED}{\mathcal B}
  \newcommand {\DOMAIN}{\mathscr D}
  \newcommand {\eins} {\mathbbm 1}
  \newcommand {\al}{\alpha}
  \newcommand {\la}{\lambda}
  \newcommand {\eps}{\varepsilon}
  \newcommand {\Ga}{\Gamma}
  \newcommand {\ga}{\gamma}
  \newcommand {\om}{\omega}
  \newcommand {\Om}{\Omega}
  \newcommand {\norm}[1] {\| #1 \|}  
  \newcommand {\abs}[1] {|#1|}
  \newcommand {\biggabs}[1] {\bigg|#1\bigg|}
  \newcommand {\lrnorm}[1]{\left\| #1 \right\|}
  \newcommand {\bignorm}[1]{\bigl\| #1 \bigr\|}
  \newcommand {\Bignorm}[1]{\Bigl\| #1 \Bigr\|}
  \newcommand {\Biggnorm}[1]{\Biggl\| #1 \Biggr\|}
  \newcommand {\biggnorm}[1]{\biggl\| #1 \biggr\|}
  \newcommand {\Bigidual}[3] {\Bigl\langle #1, #2 \Bigr\rangle_{#3}}
  \newcommand {\bigidual}[3] {\bigl\langle #1, #2 \bigr\rangle_{#3}}
  \newcommand {\idual}[3] {\langle #1, #2 \rangle_{#3}}
  \newcommand {\Bigdual}[2] {\Bigidual{#1}{#2}{}}
  \newcommand {\bigdual}[2] {\bigidual{#1}{#2}{}}
  \newcommand {\dual}[2] {\idual{#1}{#2}{} }
  \newcommand {\sector}[1] {S_{#1}}
  \newcommand{\fra}{\mathfrak a}

  \newcommand {\SUCHTHAT}{:\;}
  \newcommand {\embeds} {\hookrightarrow}

  \newcommand {\ui}{\text{\rm i}}  
  \newcommand {\SCHWARTZ} {\mathscr S}
  \newcommand {\FOURIER} {\mathscr F}
  \newcommand {\dist}{\text{dist}}
  \newcommand {\SECTOR}[1]{S_{#1}} 
  \newcommand {\calA}{{\mathcal A}}
  \newcommand {\calL}{{\mathcal L}}
  \newcommand {\calQ}{{\mathcal Q}}
  \newcommand {\calS}{{\mathcal S}}
  \def\aDeForme{\mathop{\mbox{{\LARGE $\mathfrak{a}$}}}} 
  \newcommandx*\form[3][2=\cdot,3=\cdot]{ \aDeForme(#1; #2, #3)}
  \newcommandx*\sg[3][2=A,3=t]{e^{-{#1}\cdot {#2}(#3)}}
  \newcommand {\sprod}[2] {\left[#1\,|\,#2\right]_H}
  \newcommand {\dprod}[2] {\left\langle #1, #2 \right\rangle}
  \newcommand {\bracket}[1]{\langle{#1}\rangle}
   \newcommand {\supp}{\text{supp}}

\textheight=1.02\textheight  

\allowdisplaybreaks
\title[Maximal regularity for non-autonomous evolution equations]{Maximal regularity for  non-autonomous evolution equations  governed by forms  having  less regularity }
\address{%
Institut de Math\'ematiques de Bordeaux, CNRS UMR 5251 \\Univ.  Bordeaux \\351, cours de
la Lib\'eration\\33405 Talence CEDEX\\FRANCE}
\email{
Elmaati.Ouhabaz@math.u-bordeaux.fr}
\author{El Maati Ouhabaz}
\subjclass{35K90, 35K50, 35K45, 47D06}
\keywords{Maximal regularity, sesquilinear forms, non-autonomous evolution equations, differential operators with boundary conditions.}
\thanks{The research of the author was partially supported by the ANR
project HAB, ANR-12-BS01-0013-02}

\begin{abstract}
 We consider the maximal regularity problem for non-autonomous evolution equations
 \begin{equation}
 \left\{
  \begin{array}{rcl}
     u'(t) + A(t)\,u(t) &=& f(t), \ t \in (0, \tau] \\
     u(0)&=&u_0.
  \end{array}
\right.
\end{equation}
Each operator $A(t)$ is associated with a sesquilinear form $\fra(t)$
on a Hilbert space $H$.  We assume that these forms all have the same
domain $V$.  It is proved in  \cite{HO14} that if the forms have some regularity 
with respect to $t$ 
(e.g., piecewise $\alpha$-H\"older continuous for some $\alpha >
\einhalb$) then  the above problem has  maximal $L_p$--regularity for all $u_0 $ in the
real-interpolation space $(H, \DOMAIN(A(0)))_{1-\nicefrac{1}{p},p}$. In this paper  we prove that the  regularity required there can be improved for a class of 
sesquilinear forms. The forms considered here are   such that the difference $\fra(t;\cdot,\cdot) - \fra(s; \cdot,\cdot)$ is continuous on a larger space than the common domain $V$. We give three examples which illustrate our results.
\end{abstract}

\maketitle


\section{Introduction and main results}
Let $H$ and $V$ be  real or complex Hilbert spaces such that  $V$ is 
densely  and continuously  embedded in $H$.   We
denote by $V'$ the (anti-)dual of $V$ and by $\sprod{\cdot}{\cdot}$
the scalar product of $H$ and $\dprod{\cdot}{\cdot}$ the duality
pairing $V'\times V$.  The latter  satisfies (as usual)
$\dprod{v}{h} = \sprod{v}{h}$ whenever $v \in H$ and $h \in V$. By the
standard identification of $H$ with $H'$ we then obtain continuous and
dense embeddings $V \embeds H \eqsim H' \embeds V'$. We denote by
$\norm{.}_V$ and $\norm{.}_H$ the norms of $V$ and $H$, respectively.

\smallskip

We consider  the non-autonomous evolution equation
\begin{equation}\label{eq:evol-eq} \tag{P}
\left\{
  \begin{array}{rcl}
     u'(t) + A(t)\,u(t) &=& f(t), \ t \in (0, \tau] \\
     u(0)&=&u_0,
  \end{array}
\right.
\end{equation}
where each operator $A(t)$, $t \in [0, \tau]$,  is associated with a
sesquilinear form $ \fra(t)$. We  assume that  $t \mapsto  \fra(t;u,v)$ is measurable for all $u, v \in V$ and 
     \let\ALTLABELENUMI\labelenumi \let\ALTTHEENUMI\theenumi
     \renewcommand{\labelenumi}{[H\arabic{enumi}{]}}
     \renewcommand{\theenumi}{[H\arabic{enumi}{]}}
\begin{enumerate}
\item \label{item:constant-form-domain}
  (constant form domain) $\DOMAIN(\fra(t)) = V$.
\item \label{item:uniform-continuity} (uniform boundedness) there
  exists $M>0$ such that for all $t \in [0, \tau]$ and $u, v \in V$,
  we have $|\fra(t; u,v)|\le M \norm{u}_V \norm{v}_V$.
\item \label{item:uniform-accretivity} (uniform quasi-coercivity)
  there exist $\al_1>0$, $\delta \in \RR$ such that for all $t \in [0,
  \tau]$ and all $u, v \in V$ we have $\al_1 \norm{u}_V^2 \le \Re
  \fra(t; u,u) + \delta \norm{u}_H^2$.
\end{enumerate}
     \let\labelenumi\ALTLABELENUMI
     \let\theenumi\ALTTHEENUMI

For each $ t$, we can associate with the form $\fra(t; \cdot, \cdot)$ an operator $A(t)$ defined as follows
\begin{align*}
\DOMAIN(A(t)) &= \{ u \in V, \exists  v \in H:  \fra(t,u,\varphi) = \sprod{v}{\varphi} \, \forall \varphi \in V \}\\
A(t) u &:=  v.
\end{align*}
On the other hand, there exists a  linear operator $\calA(t): V \to V'$
     such that $\fra(t; u,v) = \dprod{\calA(t)u}{v} $ for all $u, v
     \in V$. The operator $\calA(t)$ can be
seen as an unbounded operator on $V'$ with domain $V$ and $A(t)$ is  the part of $\calA(t)$ on
$H$, that is, 
\[
   \DOMAIN(A(t)) = \{ u \in V,\  \calA(t) u \in H \}, \qquad A(t) u = \calA(t) u.
\]
It is a known fact that $-A(t)$ and $-\calA(t)$ both generate
holomorphic semigroups $(e^{-s \, A(t)})_{s \ge 0}$ and $(e^{-s\, \calA(t)})_{s \ge 0}$ on $H$ and $V'$, respectively. For each
$s \ge 0$, $e^{-s \, A(t)}$ is the restriction of $e^{-s \, \calA(t)}$ to $H$.  For all this, we refer to Ouhabaz
\cite[Chapter~1]{Ouhabaz:book}.

\medskip

The notion of maximal $L_p$--regularity for the above Cauchy problem
is defined as follows.
\begin{definition}\label{def:max-reg}
  Fix $u_0 \in H$.  We say that (\ref{eq:evol-eq}) has maximal
  $L_p$--regularity (in $H$) if for each $f \in L_p(0,\tau; H)$ there
  exists a unique $u \in W^{1}_p(0, \tau; H)$, such that $u(t) \in
  \DOMAIN(A(t))$ for almost all $t$, which satisfies
  (\ref{eq:evol-eq}) in the $L_p$--sense. 
 \end{definition}
\medskip
 
\noindent 

 \noindent Recall that under the assumptions  \ref{item:constant-form-domain}--~\ref{item:uniform-accretivity}, J.L. Lions 
  proved maximal $L_2$--regularity  in $V'$ for all initial data $u_0 \in
 H$, see e.g. \cite{Lions:book-PDE}, \cite[page 112]{Showalter}. This
 means that for every $u_0 \in H$ and $f \in L_2(0,
 \tau; V')$, the equation
\begin{equation}  \label{eq:lions-pb} \tag{P'}
  \left\{
  \begin{array}{rcl}
     u'(t) + \calA(t)\,u(t) &=& f(t) \\
     u(0)&=&u_0
  \end{array}\right.
\end{equation}
has a unique solution $u \in W^{1}_2(0,\tau ; V') \cap L_2(0, \tau
;V)$.  It 
is a remarkable fact that Lions's theorem does not require any
regularity assumption (with respect to $t$) on the sesquilinear forms
apart from measurability. 
Note however that maximal regularity in $H$ differs considerably from 
maximal regularity in $V'$. The fact that the forms have the same domain means that the operators $\calA(t)$ have constant domains in $V'$ and this fact plays an important role in proving maximal regularity. The operators $A(t)$ may have different domains as operators on $H$. The problem of  maximal regularity in $H$ for \eqref{eq:evol-eq} was  stated explicitly by Lions and it  is still open in general.  Some progress has been made in recent years. \\
First, recall that Bardos \cite{Bar71} proved maximal $L_2$-regularity in $H$ with initial data $u_0 \in V$ provided 
$\DOMAIN(A(t)^\einhalb) = V$ as space and topologically and assuming that $t \mapsto  \fra(t;u,v)$ is $C^1$ on $[0, \tau]$. 
His result was extended in Arendt et al. \cite{ArendtDierLaasriOuhabaz}  for Lipschitz forms (with respect to $t$) and allowing a multiplicative perturbation by bounded operators $B(t)$ which are measurable in $t$. The maximal $L_2$-regularity is then proved for the evolution problem associated with $B(t)A(t)$. Ouhabaz and Spina \cite{OuhabazSpina} proved maximal 
$L_p$-regularity on $H$ for all $p \in (1, \infty)$  under the assumption that  $t \mapsto \fra(t;u,v)$ is $\alpha$-H\"older continuous for some $\alpha > \einhalb$. The result in 
\cite{OuhabazSpina}  concerns the problem \eqref{eq:evol-eq} with initial data $u(0) = 0$. 
A simple example was given recently by Dier \cite{Dier14} which  shows that in general the answer to Lions' problem is negative.  The following positive result  was  proved by  Haak and Ouhabaz \cite{HO14}.
\begin{theorem}\label{thm:HO}
  Suppose that the forms $(\fra(t))_{0 \le t \le \tau}$ satisfy the
   hypotheses
  \ref{item:constant-form-domain}--~\ref{item:uniform-accretivity} and
  the regularity condition
  \begin{equation}
    \label{eq:dini-regularity-of-forms}
   \left| \fra(t; u,v) - \fra(s; u,v) \right| \le \omega(|t{-}s|) \, \norm{u}_V \norm{v}_V
  \end{equation}
  where $\om: [0, \tau] \to [0, \infty)$ is a non-decreasing function such that 
\begin{equation}  \label{eq:Dini-3-2-condition}
    \int_0^\tau \tfrac{\omega(t)}{t^{\nicefrac32}}\,\dt < \infty. 
\end{equation}
Then the Cauchy problem (\ref{eq:evol-eq}) with $u_0 = 0$ has
maximal $L_p$--regularity in $H$ for all $p \in (1, \infty)$. If in addition $\om$
satisfies the $p$--Dini condition
\begin{equation}  \label{eq:p-Dini}
     \int_0^\tau \left(\tfrac{\omega(t)}{t} \right)^p \,\dt <  \infty,
\end{equation}
then (\ref{eq:evol-eq}) has maximal $L_p$--regularity for all $u_0 \in (H,
\DOMAIN(A(0)))_{1- \nicefrac1{p}, p}$. Moreover  there exists a positive constant $C$
such that
\begin{align*}
&\norm{u}_{L_p(0, \tau; H)} + \norm{u'}_{L_p(0, \tau; H)} + \norm{ A(\cdot) u(\cdot)}_{L_p(0, \tau; H)}\\
& \le C\left[ \norm{f}_{L_p(0, \tau; H)} + \norm{u_0}_{(H, \DOMAIN(A(0)))_{1-\nicefrac1{p}, p}} \right].
 \end{align*}
\end{theorem}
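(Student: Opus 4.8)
The plan is to reduce the non-autonomous problem to an autonomous one by a perturbation/freezing-coefficient argument, following the strategy that is by now standard for this circle of results (Acquistapace--Terreni, Pr\"uss--Schnaubelt, and in the form-setting Ouhabaz--Spina and Haak--Ouhabaz). Concretely, for $f \in L_p(0,\tau;H)$ and $u_0$ in the interpolation space, let $v(t) = e^{-tA(0)}u_0$; by the classical autonomous maximal regularity of the generator $-A(0)$ (which has maximal $L_p$-regularity on $H$ since $A(0)$ is associated with a bounded, coercive form, hence has a bounded $H^\infty$-calculus), $v$ lies in the desired regularity class with the estimate controlled by $\norm{u_0}_{(H,\DOMAIN(A(0)))_{1-1/p,p}}$ provided $\om$ satisfies the $p$-Dini condition \eqref{eq:p-Dini}. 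Writing $u = v + w$, the function $w$ must solve \eqref{eq:evol-eq} with zero initial data and right-hand side $g(t) = f(t) - (A(t)-A(0))v(t)$, so it suffices to show $g \in L_p(0,\tau;H)$ and then invoke the zero-initial-data part of Theorem~\ref{thm:HO} (which only needs \eqref{eq:Dini-3-2-condition}, a consequence of \eqref{eq:p-Dini} after possibly shrinking $\tau$, or can be quoted directly).

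The analytic core is therefore the estimate $\norm{(A(t)-A(0))v(t)}_{L_p(0,\tau;H)} \le C \norm{u_0}_{(H,\DOMAIN(A(0)))_{1-1/p,p}}$. First I would pass to the form level: $(A(t)-A(0))v(t) = (\calA(t)-\calA(0))v(t)$ as an element of $V'$, and by the regularity hypothesis \eqref{eq:dini-regularity-of-forms} its $V'$-norm is bounded by $\om(t)\norm{v(t)}_V$. To promote this $V'$-bound to the $H$-bound we need that $v(t) \in \DOMAIN(A(0))$ with $\norm{A(0)v(t)}_H$ controllable; here one uses that $(\calA(t)-\calA(0))v(t)$ actually lands in $H$ because $v(t)\in\DOMAIN(A(0))\subseteq\DOMAIN(A(t))$ (same form domain hypothesis \ref{item:constant-form-domain} gives $\DOMAIN(\calA(t))=V$, and the image being in $H$ needs the definition of $\DOMAIN(A(t))$). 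The quantitative bound is the decisive point: by analyticity of $(e^{-sA(0)})$ one has $\norm{A(0)^\theta v(t)}_H \lesssim t^{-\theta}\norm{u_0}$ for suitable $\theta$, and via the moment inequality and $\norm{v(t)}_V \lesssim \norm{A(0)^{1/2}v(t)}_H + \norm{v(t)}_H \lesssim t^{-1/2}\norm{u_0}$ one gets $\norm{(A(t)-A(0))v(t)}_H \lesssim \om(t)\, t^{-1/2}\norm{A(0)^{1/2}v(t)}_H + \dots$; taking the $L_p$-norm in $t$ and using the $p$-Dini condition $\int_0^\tau(\om(t)/t)^p\,\dt<\infty$ together with the fact that $t\mapsto t^{1/2}A(0)^{1/2}v(t)$ is bounded on $L_p$ when $u_0\in(H,\DOMAIN(A(0)))_{1-1/p,p}$ closes the estimate. (The precise exponent bookkeeping — matching the $t^{-1/2}$ loss against the Dini gain and the interpolation-space membership — is the place where care is needed, and is essentially where the hypothesis on the larger space of continuity in this paper will later let one do better; but for reproving Theorem~\ref{thm:HO} the crude bound above suffices.)

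It remains to assemble uniqueness and the final inequality. Uniqueness follows from Lions' $L_2$-theory in $V'$: any two $W^1_p(0,\tau;H)$-solutions of \eqref{eq:evol-eq} are in particular $W^1_2(0,\tau;V')\cap L_2(0,\tau;V)$-solutions of \eqref{eq:lions-pb} with the same data, hence coincide. Existence is the decomposition $u=v+w$ above, and the norm bound is obtained by adding the estimate for $v$ (autonomous maximal regularity plus analytic smoothing of $e^{-\cdot A(0)}u_0$) to the estimate for $w$ (the zero-initial-data conclusion of Theorem~\ref{thm:HO} applied with right-hand side $g$), then using $\norm{g}_{L_p(0,\tau;H)} \le \norm{f}_{L_p(0,\tau;H)} + C\norm{u_0}_{(H,\DOMAIN(A(0)))_{1-1/p,p}}$ from the previous paragraph. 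The main obstacle is squarely the $H$-valued estimate on $(A(t)-A(0))v(t)$: one has to balance the singularity $t^{-1/2}$ coming from the analytic semigroup against the smallness of $\om$ near $0$, and this is exactly why \eqref{eq:p-Dini} (rather than mere $p$-Dini on $\om(t)/t$ without the extra power, or mere continuity) is the natural hypothesis.
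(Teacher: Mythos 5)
Your reduction has two genuine gaps, one structural and one analytic. Structurally, you do not prove the $u_0=0$ part at all: you ``invoke the zero-initial-data part of Theorem~\ref{thm:HO}'', but that part is precisely (the hardest half of) the statement to be proved. The actual content there is the representation formula $\calA(t)u(t) = (Q\calA(\cdot)u(\cdot))(t) + (Lf)(t)$, the boundedness on $L_p(0,\tau;H)$ of the singular operators $L$ and $Q$ built from $\calA(t)e^{-(t-s)\calA(t)}(\calA(t)-\calA(s))\calA(s)^{-1}$, and the shift-by-$\mu$ scaling that makes $\norm{Q}<1$ so that $I-Q$ can be inverted; none of this appears in your proposal, so the core of the theorem is assumed rather than established. (Also, your parenthetical claim that \eqref{eq:p-Dini} implies \eqref{eq:Dini-3-2-condition} after shrinking $\tau$ is false for $p\le 2$: take $\om(t)=\sqrt{t}$ and $p<2$; it is harmless only because the theorem assumes \eqref{eq:Dini-3-2-condition} anyway.)

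Analytically, the decisive estimate in your decomposition $u=v+w$, $v(t)=e^{-tA(0)}u_0$, does not follow from the hypotheses. Hypothesis \ref{item:constant-form-domain} only says the \emph{form} domains coincide; it does not give $\DOMAIN(A(0))\subseteq\DOMAIN(A(t))$ as operators in $H$ (the introduction stresses that the $\DOMAIN(A(t))$ may all differ), so there is no reason why $(A(t)-A(0))v(t)$ is even an element of $H$. Moreover \eqref{eq:dini-regularity-of-forms} controls $\calA(t)-\calA(0)$ only as an operator from $V$ to $V'$: testing $(\calA(t)-\calA(0))v(t)$ against $\varphi$ costs $\norm{\varphi}_V$, not $\norm{\varphi}_H$, so the asserted bound $\norm{(A(t)-A(0))v(t)}_H\lesssim \om(t)\,t^{-1/2}\norm{A(0)^{1/2}v(t)}_H$ has no mechanism behind it; what you can get is a $V'$-bound, which only feeds Lions' maximal regularity in $V'$, already known. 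The way the paper (and \cite{HO14}) gets an $H$-bound for the initial-value term is to keep $Ru_0=A(t)e^{-t\calA(t)}u_0$ and compare it with $A(0)e^{-tA(0)}u_0$ through the functional calculus, writing the resolvent difference as $R(z,A(t))(A(0)-A(t))R(z,A(0))$ so that the form difference is sandwiched between \emph{two} resolvents, each contributing a smoothing factor $\norm{R(z,A(\cdot))x}_V\le C|z|^{-1/2}\norm{x}_H$ (see Lemmas \ref{lem2} and \ref{lem3} and \eqref{resol}); this produces exactly $\norm{(Ru_0-R_0u_0)(t)}_H\lesssim \om(t)/t\,\norm{u_0}_H$, which the $p$-Dini condition \eqref{eq:p-Dini} turns into an $L_p$ bound. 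Your direct freezing argument lacks the second smoothing factor on the test-function side, and repairing it essentially forces you back to the resolvent-difference/representation-formula scheme of the paper.
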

\medskip
In this theorem, $(H, \DOMAIN(A(0)))_{1-\nicefrac1{p}, p}$ denotes the
classical real-interpolation space, see \cite[Chapter
1.13]{Triebel:interpolation} or \cite[Proposition 6.2]{Lunardi:book}.

In the case where $p= 2$, we obtain maximal $L_2$--regularity for $u(0) \in \DOMAIN((\delta + A(0))^\einhalb)$. The theorem can be used in the case where $t \mapsto \fra(t;u,v)$ is $\alpha$-H\"older continuous for some $\alpha > \frac{1}{2}$. The case of piecewise $\alpha$-H\"older continuous is also covered. See \cite{HO14} for the details. 

The aim of the present paper is to weaken the regularity assumption measured by \eqref{eq:Dini-3-2-condition} and \eqref{eq:p-Dini}  in some situations. More precisely, we assume in addition to  \ref{item:constant-form-domain}--~\ref{item:uniform-accretivity} that 
there exist $\beta, \gamma \in [0,1]$ such that for all $u, v \in V$ 
\begin{equation}\label{interpolation}
 \left| \fra(t; u,v) - \fra(s; u,v) \right| \le \omega(|t{-}s|) \, \norm{u}_{V_\beta} \norm{v}_{V_\gamma},
 \end{equation}
 where $V_\beta := [H, V]_\beta$ is the classical complex interpolation space for $\beta \in [0, 1]$ with $V_0 = H$ and 
 $V_1 = V$.   If  $\beta, \gamma \in (0,1)$, the assumption \eqref{interpolation} means that the difference of the forms is defined on a larger space than the common form domain $V$. \\
Our main result is the following.
\begin{theorem}\label{thm:O}
  Suppose that the forms $(\fra(t))_{0 \le t \le \tau}$ satisfy the
  hypotheses
  \ref{item:constant-form-domain}--~\ref{item:uniform-accretivity} and \eqref{interpolation}
  where $\om: [0, \tau] \to [0, \infty)$ is a non-decreasing function such that 
\begin{equation}  \label{eq:Dini-condition}
    \int_0^\tau \tfrac{\omega(t)}{t^{1+ \frac{\gamma}{2}}}\,\dt < \infty. 
\end{equation}
Then the Cauchy problem (\ref{eq:evol-eq}) with $u_0 = 0$ has
maximal $L_p$--regularity in $H$ for all $p \in (1, \infty)$. If in addition $\om$
satisfies the $p$--Dini condition
\begin{equation}  \label{Dini} 
     \int_0^\tau \left(\tfrac{\omega(t)}{t^{\tfrac{\beta + \gamma}{2}}}  \right)^p \,\dt <  \infty,
\end{equation}
then (\ref{eq:evol-eq}) has maximal $L_p$--regularity for all $u_0 \in (H,
\DOMAIN(A(0)))_{1- \nicefrac1{p}, p}$. Moreover  there exists a positive constant $C$
such that
\begin{align*}
 &\norm{u}_{L_p(0, \tau; H)} + \norm{u'}_{L_p(0, \tau; H)} + \norm{ A(\cdot) u(\cdot)}_{L_p(0, \tau; H)} \\
&\le C\left[ \norm{f}_{L_p(0, \tau; H)} + \norm{u_0}_{(H, \DOMAIN(A(0)))_{1-\nicefrac1{p}, p}} \right].
\end{align*}
\end{theorem}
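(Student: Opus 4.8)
\emph{Reduction.} Replacing $A(t)$ by $A(t)+\delta$, $u$ by $e^{\delta t}u$ and $f$ by $e^{\delta t}f$ — a substitution which alters neither \eqref{interpolation} nor the difference $\fra(t;\cdot,\cdot)-\fra(s;\cdot,\cdot)$ — I may assume $\delta=0$. Then every form is coercive, every $A(t)$ is boundedly invertible with $\norm{A(t)^{-1}}_{\mathcal L(H)}$ and $\norm{A(t)^{-1}}_{\mathcal L(H,V)}$ bounded uniformly in $t$ (from $\al_1\norm v_V^2\le\Re\dprod{A(t)v}{v}\le\norm{A(t)v}_H\norm v_H$), and, the forms being bounded and coercive, $-A(t)$ generates a bounded analytic semigroup on $H$ with a bounded $H^\infty$-functional calculus, uniformly in $t$ (see \cite{Ouhabaz:book}). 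Below $C$ denotes a constant independent of $t,s$. It suffices to prove the a priori estimate; existence then follows by a standard approximation argument together with Lions' theorem, and uniqueness from the latter.

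\emph{The case $u_0=0$.} Fix $t\in(0,\tau]$ and differentiate $s\mapsto e^{-(t-s)A(t)}u(s)$ on $(0,t)$; using $u'(s)=f(s)-\calA(s)u(s)$, integrating from $0$ to $t$ and using $u(0)=0$ gives
\[
u(t)=\int_0^t e^{-(t-s)A(t)}f(s)\,\ds+\int_0^t e^{-(t-s)\calA(t)}\bigl(\calA(t)-\calA(s)\bigr)u(s)\,\ds .
\]
Applying $A(t)$ and writing $g:=A(\cdot)u(\cdot)$, $u(s)=A(s)^{-1}g(s)$, this becomes $g=Sf+\mathcal Rg$, where $(Sf)(t):=\int_0^t A(t)e^{-(t-s)A(t)}f(s)\,\ds$ is bounded on $L_p(0,\tau;H)$ by the uniform bounded $H^\infty$-calculus of the $A(t)$ (as in \cite{OuhabazSpina,HO14}) and $(\mathcal Rg)(t):=\int_0^t \calA(t)e^{-(t-s)\calA(t)}\bigl(\calA(t)-\calA(s)\bigr)A(s)^{-1}g(s)\,\ds$. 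Now \eqref{interpolation} enters: $\calA(t)-\calA(s)$ maps $V_\beta$ into $(V_\gamma)'$ with norm $\le\om(\abs{t-s})$, and since $A(s)^{-1}$ maps $H$ into $V\embeds V_\beta$ boundedly, $\norm{(\calA(t)-\calA(s))A(s)^{-1}}_{\mathcal L(H,(V_\gamma)')}\le C\om(\abs{t-s})$. On the other hand, since $\DOMAIN_{V'}(\calA(t))=V$ one has $\norm{e^{-r\calA(t)}}_{\mathcal L(V',V)}\le Cr^{-1}$; interpolating with $\norm{e^{-r\calA(t)}}_{\mathcal L(V')}\le C$ and using $[V',V]_\einhalb=H$ gives $\norm{e^{-r\calA(t)}}_{\mathcal L(V',H)}\le Cr^{-\einhalb}$, and a further interpolation with $\norm{e^{-r\calA(t)}}_{\mathcal L(H)}\le C$, using $(V_\gamma)'=[H,V']_\gamma$, gives $\norm{e^{-r\calA(t)}}_{\mathcal L((V_\gamma)',H)}\le Cr^{-\nicefrac{\gamma}{2}}$. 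Writing $\calA(t)e^{-r\calA(t)}=\bigl(A(t)e^{-\frac r2 A(t)}\bigr)\bigl(e^{-\frac r2 \calA(t)}\bigr)$ and using $\norm{A(t)e^{-\frac r2 A(t)}}_{\mathcal L(H)}\le Cr^{-1}$ we obtain the kernel bound
\[
\bigl\|\calA(t)e^{-(t-s)\calA(t)}\bigl(\calA(t)-\calA(s)\bigr)A(s)^{-1}\bigr\|_{\mathcal L(H)}\le C\,\frac{\om(\abs{t-s})}{\abs{t-s}^{1+\nicefrac{\gamma}{2}}}=:C\varphi(\abs{t-s}),
\]
and $\varphi\in L_1(0,\tau)$ by \eqref{eq:Dini-condition}. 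Hence $\norm{(\mathcal Rg)(t)}_H\le C\int_0^t\varphi(t-s)\norm{g(s)}_H\,\ds$, so by Young's inequality $\norm{\mathcal R}_{\mathcal L(L_p(0,a;H))}\le C\int_0^a\varphi\to0$ as $a\to0$; since moreover $\mathcal R$ is a Volterra operator with $L_1$-kernel it is quasi-nilpotent, and $I-\mathcal R$ is invertible on $L_p(0,\tau;H)$. Thus $g=(I-\mathcal R)^{-1}Sf$, so $\norm{A(\cdot)u(\cdot)}_{L_p(0,\tau;H)}=\norm g_{L_p}\le C\norm f_{L_p}$, and from $u'=f-g$ and $u(0)=0$ also $\norm{u'}_{L_p}+\norm u_{L_p}\le C\norm f_{L_p}$; this proves the claim for $u_0=0$.

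\emph{The case $u_0\in(H,\DOMAIN(A(0)))_{1-\nicefrac1{p},p}$.} Write $u=v+w$ with $v(t):=e^{-tA(0)}u_0$. By the classical identification of $(H,\DOMAIN(A(0)))_{1-\nicefrac1{p},p}$ with the space of initial data for which the autonomous problem governed by $A(0)$ has maximal $L_p$-regularity, $v\in W^1_p(0,\tau;H)$ with $A(0)v\in L_p(0,\tau;H)$ and these norms $\le C\norm{u_0}_{(H,\DOMAIN(A(0)))_{1-\nicefrac1{p},p}}$. Then $w(0)=0$ and $w'+\calA(\cdot)w=f+(\calA(0)-\calA(\cdot))v$; running the scheme above on this right-hand side, the only new contribution is $\int_0^t\calA(t)e^{-(t-s)\calA(t)}(\calA(s)-\calA(0))e^{-sA(0)}u_0\,\ds$. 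Here $\norm{(\calA(s)-\calA(0))e^{-sA(0)}u_0}_{(V_\gamma)'}\le\om(s)\norm{e^{-sA(0)}u_0}_{V_\beta}$ and, since $\DOMAIN(A(0)^{\nicefrac{\beta}{2}})=V_\beta$ (again from $[V',V]_\einhalb=H$ and the bounded $H^\infty$-calculus of $A(0)$), $\norm{e^{-sA(0)}u_0}_{V_\beta}\le C\norm{A(0)^{\nicefrac{\beta}{2}}e^{-sA(0)}u_0}_H$. Combining the smoothing $r^{-\nicefrac{\gamma}{2}}$ of $e^{-r\calA(t)}$ on $(V_\gamma)'$ with that of $A(0)^{\nicefrac{\beta}{2}}e^{-sA(0)}$ on $u_0$, and using a Hardy-type inequality in $t$ together with $u_0\in(H,\DOMAIN(A(0)))_{1-\nicefrac1{p},p}$, this term is estimated in $L_p(0,\tau;H)$ by $C\norm{u_0}_{(H,\DOMAIN(A(0)))_{1-\nicefrac1{p},p}}$ exactly under the $p$-Dini condition \eqref{Dini}, whose exponent $\nicefrac{(\beta+\gamma)}{2}$ is the sum of the two gains $\nicefrac{\gamma}{2}$ and $\nicefrac{\beta}{2}$. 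Adding the contribution of $v$ gives the full estimate.

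\emph{Main obstacle.} The heart of the argument is the kernel estimate in the second step: converting \eqref{interpolation} into the bound $\varphi(r)=\om(r)r^{-1-\nicefrac{\gamma}{2}}$, which produces the exponent of \eqref{eq:Dini-condition} and rests on the smoothing estimate $\norm{e^{-r\calA(t)}}_{\mathcal L((V_\gamma)',H)}\le Cr^{-\nicefrac{\gamma}{2}}$ obtained by interpolating within the Gelfand triple. The more delicate bookkeeping occurs in the last step: the modified source is no longer $H$-valued, so fractional powers must be tracked carefully, and the exponent $\nicefrac{(\beta+\gamma)}{2}$ of \eqref{Dini} emerges only by exploiting the two smoothing effects simultaneously — that of $e^{-r\calA(t)}$ acting from $(V_\gamma)'$ and that of $e^{-sA(0)}$ acting into $V_\beta$ on the datum $u_0$ — together with the identification $\DOMAIN(A(0)^{\nicefrac{\beta}{2}})=V_\beta$.
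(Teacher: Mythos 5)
Your treatment of the case $u_0=0$ is essentially the paper's argument: the same representation formula \eqref{eq:AT00}, and the same kernel bound $C\,\omega(t-s)(t-s)^{-1-\nicefrac{\gamma}{2}}$ for $\calA(t)e^{-(t-s)\calA(t)}(\calA(t)-\calA(s))\calA(s)^{-1}$, which you obtain by interpolating along the Gelfand triple where the paper does the equivalent computation by duality against $\calA(t)^*e^{-(t-s)\calA(t)^*}v$. Your way of inverting $I-Q$ (smallness on short intervals plus quasi-nilpotency of Volterra operators dominated by a truncated $L_1$-convolution) is a valid alternative to the paper's device of replacing $A(t)$ by $A(t)+\mu$ and using $\norm{(\calA(s)+\mu)^{-1}}_{\BOUNDED(H)}\le\mu^{-1}$ to make $Q$ strictly contractive; either route works. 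One imprecision: the boundedness of $(Sf)(t)=\int_0^t A(t)e^{-(t-s)A(t)}f(s)\,\ds$ on $L_p(0,\tau;H)$ is \emph{not} a consequence of the uniform bounded $H^\infty$-calculus alone (otherwise Lions' problem would be solved without any regularity); it is Lemma~2.6 of \cite{HO14} (Lemma~\ref{lem1} here), which requires $\int_0^\tau\omega(t)^2 t^{-1}\dt<\infty$ — a condition that does follow from \eqref{eq:Dini-condition}, but this should be said.

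The initial-data step, however, has a genuine gap. Writing $u=e^{-\cdot A(0)}u_0+w$ produces the term $T(t)=\int_0^t\calA(t)e^{-(t-s)\calA(t)}(\calA(s)-\calA(0))e^{-sA(0)}u_0\,\ds$, and the estimate you sketch does not go through: the operator $\calA(t)e^{-(t-s)\calA(t)}$ costs $(t-s)^{-1-\nicefrac{\gamma}{2}}$ from $(V_\gamma)'$ to $H$, which is \emph{not integrable} near $s=t$, and there the modulus is $\omega(s)\approx\omega(t)$ — unlike in the $Q$-estimate, where the factor $\omega(t-s)$ compensates the singularity. The two smoothing effects you propose to "combine" sit at opposite endpoints of the integral ($s=t$ for $\calA(t)e^{-(t-s)\calA(t)}$, $s=0$ for $\omega(s)$ and $A(0)^{\nicefrac{\beta}{2}}e^{-sA(0)}$), so they cannot simply be multiplied, and no Hardy-type inequality in $t$ can repair an $s$-integral that diverges pointwise when $\gamma>0$. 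If one tries to fix this by splitting $\calA(s)-\calA(0)=(\calA(s)-\calA(t))+(\calA(t)-\calA(0))$, the first piece is indeed handled by the $Q$-type kernel, but the second collapses, via $\int_0^t e^{-(t-s)\calA(t)}(\calA(t)-\calA(0))e^{-sA(0)}\,\ds=e^{-tA(0)}-e^{-t\calA(t)}$, to precisely the difference $\calA(t)e^{-t\calA(t)}u_0-\calA(t)e^{-tA(0)}u_0$, i.e.\ (up to the harmless term $A(0)e^{-tA(0)}u_0$) the paper's $R-R_0$. Estimating this in $H$ by $C\,\omega(t)\,t^{-\nicefrac{(\beta+\gamma)}{2}}\norm{u_0}_H$ is the actual content of the paper's Lemmas~\ref{lem2} and~\ref{lem3}: a resolvent-difference bound and a contour integral, in which both gains $\nicefrac{\beta}{2}$ and $\nicefrac{\gamma}{2}$ act on the \emph{same} resolvent at the same spectral parameter. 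Your proposal asserts the conclusion (and correctly predicts the exponent of \eqref{Dini}) but supplies no mechanism that produces it; this step must be proved, e.g.\ as in Lemmas~\ref{lem2}--\ref{lem3}.

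A further point: you invoke $\DOMAIN(A(0)^{\nicefrac{\beta}{2}})=V_\beta$. For $\beta<1$ this is the Kato--Lions theorem on fractional powers below $\einhalb$ (not a consequence of the $H^\infty$-calculus as such), but for $\beta=1$ it is exactly the Kato square root property $\DOMAIN(A(0)^{\einhalb})=V$, which fails for general non-symmetric forms and is deliberately \emph{not} assumed in this paper (this is one of the points of difference with \cite{AM14}). The paper's route through $Ru_0=\calA(\cdot)e^{-\cdot\calA(\cdot)}u_0$ avoids this identification altogether; your decomposition should be modified so as not to rely on it when $\beta=1$.
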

\medskip
A related result was  proved recently by Arendt and Monniaux \cite{AM14} who prove maximal $L_2$-regularity under the additional assumptions that the Kato square root property $V = \DOMAIN(A(0)^\einhalb)$ holds, $\beta = \gamma$ in \eqref{interpolation} and an additional growth condition  $\omega(t) \le C t^{\tfrac{\gamma}{2}}$. 
We observe that in our result  $\beta$ does not come into play if $u_0 = 0$. We expect the theorem to be true with $\min(\beta, \gamma)$
in place of $\gamma$ in \eqref{eq:Dini-condition}. 

The following two corollaries follow immediately from the theorem.
\begin{corollary}\label{Cor1}
Suppose that the forms $(\fra(t))_{0 \le t \le \tau}$ satisfy the
  hypotheses
  \ref{item:constant-form-domain}--~\ref{item:uniform-accretivity} and $\alpha$-H\"older continuous in the sense that
  \begin{equation}\label{Holder} 
  | \fra(t,u,v) - \fra(s,u,v) | \le C | t - s |^\alpha \norm{u}_{V_\beta}\norm{v}_{V_\gamma}. 
  \end{equation}
  Then the Cauchy problem (\ref{eq:evol-eq}) with $u_0 = 0$ has
maximal $L_p$--regularity in $H$ for all $p \in (1, \infty)$ provided $\alpha > \tfrac{\gamma}{2}$. 
 If in addition $\al > \tfrac{\beta + \gamma}{2} - \tfrac{1}{p}$,
then (\ref{eq:evol-eq}) has maximal $L_p$--regularity for all $u_0 \in (H,
\DOMAIN(A(0)))_{1- \nicefrac1{p}, p}$. Moreover  there exists a positive constant $C$
such that
\begin{align*}
&\norm{u}_{L_p(0, \tau; H)} + \norm{u'}_{L_p(0, \tau; H)} + \norm{ A(\cdot) u(\cdot)}_{L_p(0, \tau; H)}\\
 &\le C\left[ \norm{f}_{L_p(0, \tau; H)} + \norm{u_0}_{(H, \DOMAIN(A(0)))_{1-\nicefrac1{p}, p}} \right].
\end{align*}
\end{corollary}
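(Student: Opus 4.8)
The plan is to apply Theorem \ref{thm:O} directly with the explicit modulus of continuity $\omega(t) = C t^\alpha$. This function is non-decreasing on $[0,\tau]$ (note that the exponent conditions imposed in the corollary force $\alpha > 0$), and with this choice hypothesis \eqref{interpolation} is exactly the Hölder estimate \eqref{Holder}. It therefore suffices to rewrite the two Dini-type integrability conditions appearing in Theorem \ref{thm:O} as conditions on the exponent $\alpha$.

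First I would check \eqref{eq:Dini-condition}. With $\omega(t) = C t^\alpha$ we get
\[
  \int_0^\tau \frac{\omega(t)}{t^{1+\frac{\gamma}{2}}}\,\dt = C\int_0^\tau t^{\alpha - 1 - \frac{\gamma}{2}}\,\dt,
\]
and this integral is finite precisely when $\alpha - 1 - \frac{\gamma}{2} > -1$, i.e. when $\alpha > \frac{\gamma}{2}$. Under this assumption the first part of Theorem \ref{thm:O} already gives maximal $L_p$-regularity in $H$ for \eqref{eq:evol-eq} with $u_0 = 0$ and every $p \in (1,\infty)$.

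For the second statement I would in addition examine the $p$-Dini condition \eqref{Dini}: again with $\omega(t) = C t^\alpha$,
\[
  \int_0^\tau \Big( \frac{\omega(t)}{t^{\frac{\beta+\gamma}{2}}} \Big)^p \,\dt = C^p \int_0^\tau t^{p\left(\alpha - \frac{\beta+\gamma}{2}\right)}\,\dt,
\]
which converges if and only if $p\bigl(\alpha - \tfrac{\beta+\gamma}{2}\bigr) > -1$, that is, $\alpha > \frac{\beta+\gamma}{2} - \frac{1}{p}$. Keeping the assumption $\alpha > \frac{\gamma}{2}$ from the first part, both integrability conditions \eqref{eq:Dini-condition} and \eqref{Dini} then hold, so the second part of Theorem \ref{thm:O} applies and yields maximal $L_p$-regularity for all $u_0 \in (H, \DOMAIN(A(0)))_{1-\nicefrac1{p},p}$ together with the stated a priori estimate.

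There is essentially no genuine obstacle here: the whole argument reduces to the elementary fact that $t\mapsto t^{a}$ is integrable near $0$ if and only if $a>-1$. The only point worth a comment is that the second assertion really does require \emph{both} $\alpha > \frac{\gamma}{2}$ and $\alpha > \frac{\beta+\gamma}{2} - \frac{1}{p}$, and these two bounds are not comparable in general (their difference is $\frac{\beta}{2} - \frac{1}{p}$); this is exactly why the corollary phrases the second inequality as an additional hypothesis rather than a replacement of the first.
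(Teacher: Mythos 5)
Your argument is correct and is exactly the intended one: the paper states that the corollary "follows immediately from the theorem", and the verification amounts to plugging $\omega(t)=Ct^{\alpha}$ into \eqref{eq:Dini-condition} and \eqref{Dini} and noting that $t^{a}$ is integrable near $0$ iff $a>-1$, precisely as you did. Your closing remark that both exponent conditions are genuinely needed (and not comparable in general) is a correct reading of the statement as well.
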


\begin{corollary}\label{Cor2}
Suppose that the forms $(\fra(t))_{0 \le t \le \tau}$ satisfy the
 hypotheses
  \ref{item:constant-form-domain}--~\ref{item:uniform-accretivity} and $\alpha$-H\"older continuous in the sense that
  \begin{equation}\label{Holder} 
  | \fra(t,u,v) - \fra(s,u,v) | \le C | t - s |^\alpha \norm{u}_{V_\beta}\norm{v}_{V_\gamma}, 
  \end{equation}
 for some $\alpha > \tfrac{\gamma}{2}$.  Then the Cauchy problem (\ref{eq:evol-eq})  has
maximal $L_2$--regularity in $H$  for all $u_0 \in \DOMAIN((\delta + A(0))^\einhalb)$. 
Moreover  there exists a positive constant $C$
such that
\begin{align*} 
&\norm{u}_{L_p(0, \tau; H)} + \norm{u'}_{L_p(0, \tau; H)} + \norm{ A(\cdot) u(\cdot)}_{L_p(0, \tau; H)}\\
& \le C\left[ \norm{f}_{L_p(0, \tau; H)} + 
\norm{ (\delta + A(0))^\einhalb u_0}_H \right].
\end{align*}
\end{corollary}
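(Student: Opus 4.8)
The plan is to obtain Corollary~\ref{Cor2} as the special case $p=2$, $\om(t)=C\,t^\al$ of Theorem~\ref{thm:O}, followed by the standard identification of the real-interpolation space of order $\einhalb$ with a fractional-power domain.

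First I would check that the two Dini-type hypotheses of Theorem~\ref{thm:O} are met for $\om(t)=C\,t^\al$ and $p=2$. The integrand in \eqref{eq:Dini-condition} is $C\,t^{\al-1-\frac{\gamma}{2}}$, which is integrable on $(0,\tau)$ exactly when $\al-1-\tfrac{\gamma}{2}>-1$, i.e.\ $\al>\tfrac{\gamma}{2}$ --- precisely the standing assumption of the corollary; and the integrand of the $p$-Dini condition \eqref{Dini} with $p=2$ is $C^2\,t^{2\al-\beta-\gamma}$, integrable on $(0,\tau)$ exactly when $2\al-\beta-\gamma>-1$, i.e.\ $\al>\tfrac{\beta+\gamma-1}{2}$. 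Since $\beta\le1$ we have $\tfrac{\beta+\gamma-1}{2}\le\tfrac{\gamma}{2}<\al$, so \eqref{Dini} holds automatically; the binding requirement is thus $\al>\tfrac{\gamma}{2}$. Hence Theorem~\ref{thm:O} applies with $p=2$ and yields maximal $L_2$-regularity of \eqref{eq:evol-eq} for every $u_0\in(H,\DOMAIN(A(0)))_{1-\nicefrac{1}{2},2}=(H,\DOMAIN(A(0)))_{\einhalb,2}$, together with the a~priori estimate whose initial-data term is $\norm{u_0}_{(H,\DOMAIN(A(0)))_{\einhalb,2}}$.

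It then remains to identify this space, namely $(H,\DOMAIN(A(0)))_{\einhalb,2}=\DOMAIN((\delta+A(0))^{\einhalb})$ with equivalent norms. By \ref{item:uniform-continuity}--\ref{item:uniform-accretivity} the operator $\delta+A(0)$ is $m$-sectorial on $H$, and $\Re\sprod{(\delta+A(0))u}{u}\ge\al_1\norm{u}_V^2\ge c\,\norm{u}_H^2$ for some $c>0$ (as $V\embeds H$), so $0\in\rho(\delta+A(0))$ and $(\delta+A(0))^{\einhalb}$ is a closed invertible operator whose domain carries the equivalent norm $u_0\mapsto\norm{(\delta+A(0))^{\einhalb}u_0}_H$. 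Since $m$-sectorial operators on a Hilbert space have bounded imaginary powers of angle $<\pi$ (see \cite{Ouhabaz:book}; this is exactly the fact already used for the case $p=2$ after Theorem~\ref{thm:HO}), the complex interpolation space $[H,\DOMAIN(A(0))]_{\einhalb}$ coincides with $\DOMAIN((\delta+A(0))^{\einhalb})$ with equivalent norms, and since $H$ and $\DOMAIN(A(0))$ (with the graph norm) form a couple of Hilbert spaces, $[H,\DOMAIN(A(0))]_{\einhalb}=(H,\DOMAIN(A(0)))_{\einhalb,2}$ with equivalent norms (see \cite{Triebel:interpolation}). Chaining these identifications completes the argument, letting the constant $C$ in the final estimate absorb the norm-equivalence constants it produces. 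I do not foresee a genuine obstacle here: the only mildly non-routine step is this last identification of the order-$\einhalb$ real-interpolation space with the fractional-power domain, which rests on $\delta+A(0)$ having bounded imaginary powers and on the coincidence of complex and $L^2$-real interpolation for Hilbert couples; everything else is elementary bookkeeping with the exponents in the Dini integrals.
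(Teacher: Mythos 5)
Your proposal is correct and is exactly the intended derivation: the paper states that Corollary~\ref{Cor2} follows immediately from Theorem~\ref{thm:O} by taking $\om(t)=Ct^{\al}$ and $p=2$, where \eqref{eq:Dini-condition} amounts to $\al>\nicefrac{\gamma}{2}$ and \eqref{Dini} is automatic since $\tfrac{\beta+\gamma-1}{2}\le\tfrac{\gamma}{2}$, and then identifying $(H,\DOMAIN(A(0)))_{\einhalb,2}$ with $\DOMAIN((\delta+A(0))^{\einhalb})$, precisely the identification invoked after Theorem~\ref{thm:HO}. Your justification of that identification (BIP for the m-sectorial operator $\delta+A(0)$ plus coincidence of complex and real $(\cdot)_{\einhalb,2}$ interpolation for Hilbert couples) is the standard argument and is fine.
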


\medskip
{\it Notation: } We shall often use  $C$ or $C'$ to denote all inessential constants. We use $W^1_p (0, \tau;E)$ as well as $H^s (\Omega) := W^s_2(\Omega)$ for the classical Sobolev spaces. The first one  is the Sobolev space of order one of $L_p$-functions  on  $(0, \tau)$ with values in a Banach space $E$  and the second one is the Sobolev space of order $s$ of $L_2$ scalar-valued functions acting on a domain $\Omega$.

\section{Proof of the main result}\label{Section2}
Throughout this section we adopt the notation of the introduction. We shall use the strategy and ideas of proof of Theorem \ref{thm:HO} in \cite{HO14}
with some modifications in order to incorporate the additional assumption \eqref{interpolation}.

Recall that the solution $u$ to \eqref {eq:evol-eq} exists in $V'$ by Lions' theorem mentioned in the introduction. The aim is to prove that $u(t) \in \DOMAIN(A(t))$ for almost all 
$t \in [0, \tau]$ and $A(.) u(.) \in L_p(0, \tau ; H)$. From this and the Cauchy problem \eqref{eq:evol-eq} it follows that 
$u \in W^1_p(0,\tau;H)$. 

From now on we  assume without loss of generality that the forms are coercive, that is 
\ref{item:uniform-accretivity} holds with $\delta = 0$. The reason is that by replacing $A(t)$ by $A(t) + \delta$, the solution $v$ 
of  \eqref {eq:evol-eq} is 
$v(t) = u(t) e^{-\delta t}$ and it is clear that $u \in W^1_p(0, \tau; H)$ {\it if and only if} $v \in W^1_p(0, \tau; H)$.

First we have the representation formula (see \cite{HO14} for all what follows)
 \begin{equation}    \label{eq:AT00}
    \begin{split}
              u(t) = & \;  \int_0^t  e^{-(t{-}s)  \calA(t)} (\calA(t){-}\calA(s)) u(s)\,\ds \\ 
                     & \quad + \int_0^t e^{-(t{-}s) \calA(t) } f(s)\,\ds  + e^{-t \,\calA(t)}u_0.
    \end{split}
  \end{equation}
In addition,
 \begin{equation}\label{egal}
 \calA(t) u(t) = (Q \calA(\cdot)u(\cdot))(t) + (L f)(t) + (Ru_0)(t),
 \end{equation} 
where
\begin{align*}
(Q g)(t) := & \; \int_0^t  \calA(t) e^{-(t{-}s) \calA(t)} (\calA(t) - \calA(s)) \, \calA(s)^{-1}g(s)\,\ds\\
(L g )(t) :=& \;  \calA(t) \int_0^t  e^{-(t{-}s) \calA(t)} g(s)\,\ds 
  \quad \text{and} \quad  
  (R u_0)(t) := \calA(t) e^{-t \, \calA(t)} u_0.
\end{align*}
The aim is to prove boundedness on $L_p(0,\tau;H)$ of the  operators  $L$, $R$ and $Q$ and then by a simple scaling argument 
the norm of $Q$ is less than $1$. This allows us to invert $(I-Q)$ on $L_p(0,\tau ; H)$ and conclude from \eqref{egal} that 
$A(.) u(.) \in L_p(0, \tau ; H)$. 

We start with the operator $L$. The following result is Lemma 2.6 in \cite{HO14}.

\begin{lemma}\label{lem1}
Suppose that in addition to the  assumptions
  \ref{item:constant-form-domain}-~\ref{item:uniform-accretivity} that 
  \eqref{interpolation}  holds for some $\beta, \gamma \in [0, 1]$ 
  and  $\om: [0, \tau] \to [0, \infty)$  a non-decreasing function such that 
\begin{equation}  \label{eq:Dini-2-1-condition}
    \int_0^\tau \tfrac{\omega(t)^2}{t}\,\dt < \infty. 
\end{equation}
 Then $L$ is a bounded operator on $L_p(0, \tau; H)$ for all $p \in (1, \infty)$.  
\end{lemma}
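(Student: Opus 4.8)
The plan is to show that $L$ is bounded on $L_p(0,\tau;H)$ by observing that $L$ splits as the sum of two pieces: the "autonomous" part, where $\calA(t)$ is frozen, and a commutator-type remainder measuring the $t$-dependence. Concretely, write
\[
  (Lg)(t) = \int_0^t \calA(t) e^{-(t-s)\calA(t)} g(s)\,\ds
          = \int_0^t A(t) e^{-(t-s)A(t)} P g(s)\,\ds + \text{(correction)},
\]
but more to the point, one decomposes $(Lg)(t) = \int_0^t A(t)e^{-(t-s)A(t)} g(s)\,\ds$ directly on $H$ (since $g$ takes values in $H$ and $e^{-r\calA(t)}$ restricts to $e^{-rA(t)}$ on $H$), and then the only issue is the $t$-dependence of the generator inside the kernel. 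The standard device — used in \cite{OuhabazSpina} and \cite{HO14} — is to add and subtract the frozen operator: set
\[
  (Lg)(t) = A(t)\!\int_0^t e^{-(t-s)A(t)} g(s)\,\ds
          = A(t)\!\int_0^t e^{-(t-s)A(s)} g(s)\,\ds
            + A(t)\!\int_0^t\!\big(e^{-(t-s)A(t)} - e^{-(t-s)A(s)}\big) g(s)\,\ds.
\]
The first term, after recognising it as (a piece of) the maximal regularity operator for the frozen problem together with the Lions bound, is handled by the uniform sectoriality of the $A(t)$ and a vector-valued Mikhlin/singular-integral argument; this is classical and requires no regularity in $t$.

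The heart of the matter is the second term. Here I would use the identity
\[
  e^{-rA(t)} - e^{-rA(s)} = \int_0^r \frac{d}{d\sigma}\Big(e^{-(r-\sigma)A(s)} e^{-\sigma A(t)}\Big)\,d\sigma
   = \int_0^r e^{-(r-\sigma)A(s)}\,(A(s)-A(t))\,e^{-\sigma A(t)}\,d\sigma,
\]
with $r = t-s$, and then estimate in the $H$-norm by inserting the hypothesis \eqref{interpolation}. The point of the interpolation hypothesis is that $(A(t)-A(s))e^{-\sigma A(t)} = \calA(t) e^{-\sigma\calA(t)} - \calA(s)e^{-\sigma\calA(t)}$ maps into $V_\gamma'$ with norm controlled by $\omega(|t-s|)$ times $\|e^{-\sigma A(t)} h\|_{V_\beta}$ (really one tests against $V_\gamma$), and the smoothing estimates $\|A(t)^\theta e^{-\sigma A(t)}\| \le C\sigma^{-\theta}$ for $\theta = \tfrac{\beta}{2}$ and $\theta = \tfrac{\gamma}{2}$ — valid uniformly in $t$ by \ref{item:uniform-continuity}--\ref{item:uniform-accretivity} and the identification $\DOMAIN(A(t)^{1/2}) \hookrightarrow V$, $[H,\DOMAIN(A(t))]_{\theta}$-type bounds — give a pointwise kernel bound of the form
\[
  \big\| A(t) e^{-(t-s)\sigma\text{-stuff}}\big\| \lesssim \frac{\omega(t-s)}{(t-s)\,(t-s)^{(\beta+\gamma)/2}}\,\cdot(t-s)^{?}
\]
after carrying out the $\sigma$-integral; the exponents conspire so that one is left with a scalar kernel $k(t,s) = \dfrac{\omega(t-s)}{(t-s)}$ acting by convolution, plus the extra powers of $(t-s)$ that get absorbed. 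One then invokes \eqref{eq:Dini-2-1-condition}: the condition $\int_0^\tau \omega(t)^2/t\,\dt < \infty$ is exactly the $L^2$-in-time smallness needed for the convolution-type kernel to define a bounded operator on $L_p(0,\tau;H)$ for every $p\in(1,\infty)$ — this is a Schur-test / Young-type estimate once the kernel is split into its near-diagonal ($L^2$) part and its off-diagonal ($L^1$) part, as in the proof of Lemma~2.6 of \cite{HO14}.

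I expect the main obstacle to be the bookkeeping of the fractional-power smoothing estimates with the operators $A(t)$ appearing at different times: one must be careful that $\|A(t)^{\gamma/2} e^{-rA(t)}\|_{\mathcal L(H)} \le C r^{-\gamma/2}$ holds with a constant \emph{uniform in} $t\in[0,\tau]$, and that the dual/interpolation-space norms $\|\cdot\|_{V_\beta}$, $\|\cdot\|_{V_\gamma'}$ are comparable — uniformly in $t$ — to the corresponding fractional-power graph norms of $A(t)$; both follow from \ref{item:uniform-continuity}--\ref{item:uniform-accretivity} together with McIntosh-type bounded $H^\infty$-calculus for m-sectorial operators associated to forms (see \cite{Ouhabaz:book}), but the uniformity must be tracked explicitly. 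Once these are in hand, the final step — verifying that the resulting scalar kernel, governed by $\omega(t-s)/(t-s)$ with $\omega \in L^2(0,\tau;dt/t)$, yields an $L_p$-bounded integral operator — is a routine Hölder/Young argument and is not where the difficulty lies. Note that in this lemma only the regularity condition \eqref{eq:Dini-2-1-condition} is needed, which is weaker than \eqref{eq:Dini-condition}; the stronger Dini conditions in Theorem~\ref{thm:O} are required only for the operators $Q$ and $R$.
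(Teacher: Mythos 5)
There is a genuine gap, and it sits exactly at the step you dismiss as routine. Your plan reduces matters to a difference/commutator kernel of size $\omega(t-s)\,(t-s)^{-(\beta+\gamma)/2}$ (this is indeed the right order; compare Lemma~\ref{lem2} and the estimate \eqref{RR} in Lemma~\ref{lem3}, obtained via the resolvent representation rather than your Duhamel identity, in which the mixed factor $A(t)e^{-(r-\sigma)A(s)}$ cannot be estimated in $\BOUNDED(H)$ by smoothing alone), and you then invoke a Schur/Young argument ``exactly'' matching \eqref{eq:Dini-2-1-condition}. But \eqref{eq:Dini-2-1-condition} is the \emph{square}-Dini condition, which does not imply the Dini condition: for $\omega(t)=(\log(e\tau/t))^{-1}$ one has $\int_0^\tau \omega(t)^2\,\tfrac{\dt}{t}<\infty$ while $\int_0^\tau \omega(t)\,\tfrac{\dt}{t}=\infty$. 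Since the lemma allows $\beta=\gamma=1$ (and must, as this is the setting of \cite{HO14}), the kernel $\omega(t-s)/(t-s)$ need not be integrable in either variable, so no Young/Schur or $L^1$/$L^2$ near/off-diagonal splitting can give $L_p$-boundedness from \eqref{eq:Dini-2-1-condition} alone; your route would prove the lemma only under the strictly stronger hypothesis $\int_0^\tau\omega(t)/t\,\dt<\infty$. (When $\beta+\gamma<2$ the kernel \emph{is} integrable under \eqref{eq:Dini-2-1-condition} by Cauchy--Schwarz, $\int_0^\tau \omega(r)r^{-(\beta+\gamma)/2}\,\dr\le(\int_0^\tau\omega^2/r\,\dr)^{1/2}(\int_0^\tau r^{1-\beta-\gamma}\,\dr)^{1/2}$, so your endgame could be salvaged in that regime --- but not in the endpoint case the lemma covers.) A second problem is your ``autonomous'' piece: as written it is $A(t)\int_0^t e^{-(t-s)A(s)}g(s)\,\ds$, with the unbounded $A(t)$ outside and the semigroup generated by $A(s)$ inside; this is not the frozen maximal-regularity operator, it is not even obviously $H$-valued without again comparing $A(t)$ with $A(s)$, and it is not handled by a vector-valued Mikhlin argument ``with no regularity in $t$''. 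Even the fully frozen operator with kernel $A(s)e^{-(t-s)A(s)}$ is a nontrivial object whose $L_p$-boundedness requires uniform square-function/functional-calculus estimates, not classical multiplier theory.

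For comparison: the paper itself does not reprove this statement --- it quotes it as Lemma~2.6 of \cite{HO14}, which applies verbatim because $\norm{u}_{V_\beta}\le C\norm{u}_V$ and $\norm{v}_{V_\gamma}\le C\norm{v}_V$, so \eqref{interpolation} implies the $V\times V$ hypothesis used there with the same (square-Dini) modulus. In \cite{HO14} the condition $\int_0^\tau\omega(t)^2/t\,\dt<\infty$ enters through a quadratic mechanism (duality combined with square-function estimates, uniform in $t$, for the operators $A(t)$, with the comparison $A(t)\leftrightarrow A(s)$ summed in an $L_2(\dt)$ fashion), not through integrability of a scalar kernel; that is precisely why square-Dini suffices there. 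If you want a self-contained proof, you should redo that quadratic argument (keeping the kernel frozen at the outer time $t$, as in the definition of $L$, so that the square-function estimate applies to the factor hitting the test function), rather than the freezing-plus-Young scheme you propose.
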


Now we deal with the operator $R$. 

Recall first that $-A(t)$ is the  generator of a bounded holomorphic semigoup of angle 
$\frac{\pi}{2} - \arctan(\frac{M}{\alpha_0})$ where $\alpha_0$ and $M$ are as in the assumptions \ref{item:uniform-continuity} and  \ref{item:uniform-accretivity}.
See \cite[Chapter~1]{Ouhabaz:book} or \cite{HO14}. In addition we have
\begin{lemma}\label{lem2}
Let $\omega: \RR \to \RR_+$ be some function  and 
assume that
\[
| \fra(t; u,v)-\fra(s; u,v)| \le \omega(| t{-}s|) \norm{u}_{V_\beta} \norm{v}_{V_\gamma}
\]
for all $u, v  \in V$. Then 
\[
   \norm{ R(z,A(t)) - R(z,A(s)) }_{\BOUNDED(H)} \le \tfrac{ c_\theta}{|z|^{1 - \frac{\beta + \gamma}{2}} }\omega(| t{-}s|)
\]
for all $z \notin \sector{\theta}$ with any fixed  $\theta > \arctan(\nicefrac{M}{\alpha})$. The constant $c_\theta$ is independent of $z, t$ and $s$. 
\end{lemma}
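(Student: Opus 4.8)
The plan is to derive the estimate from the resolvent identity, exploiting the fact that although $\calA(t)-\calA(s)$ is only bounded from $V_\beta$ into the dual space $(V_\gamma)'$, the resolvents of $\calA(s)$ and $\calA(t)$ are smoothing along the Gelfand triple $V'\hookrightarrow H\hookrightarrow V$, and this gain of regularity compensates the loss. I would start from the identity
\[
  R(z,A(t))-R(z,A(s))=R(z,\calA(t))\,\bigl(\calA(t)-\calA(s)\bigr)\,R(z,\calA(s)),
\]
which holds as an operator identity on $V'$ because $\calA(t)$ and $\calA(s)$ have the common domain $V$. Its restriction to $H$ is again $R(z,A(t))-R(z,A(s))$: for $x\in H$ one has $R(z,\calA(s))x=R(z,A(s))x\in\DOMAIN(A(s))\subseteq V$, then $(\calA(t)-\calA(s))R(z,A(s))x\in V'$, and finally $R(z,\calA(t))$ sends this back into $V\subseteq H$.

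The bound then splits into three factors. The middle one is immediate from the hypothesis: since $\abs{\dprod{(\calA(t)-\calA(s))u}{v}}=\abs{\fra(t;u,v)-\fra(s;u,v)}\le\omega(|t-s|)\,\norm{u}_{V_\beta}\norm{v}_{V_\gamma}$ for $u,v\in V$, and $V$ is dense in both $V_\beta$ and $V_\gamma$, the operator $\calA(t)-\calA(s)$ extends to a bounded map $V_\beta\to(V_\gamma)'$ of norm at most $\omega(|t-s|)$. For the two outer factors I would establish, uniformly in $t$, in $s$ and in $z\notin\sector\theta$ with $|z|\ge 1$,
\[
  \norm{R(z,\calA(s))}_{H\to V_\beta}\le\frac{C}{|z|^{1-\frac{\beta}{2}}},\qquad
  \norm{R(z,\calA(t))}_{(V_\gamma)'\to H}\le\frac{C}{|z|^{1-\frac{\gamma}{2}}}.
\]
For the first I would combine the standard sectorial estimates $\norm{R(z,A(s))}_{\BOUNDED(H)}\le C_\theta/|z|$ and $\norm{A(s)R(z,A(s))}_{\BOUNDED(H)}\le C_\theta$ with the coercivity inequality $\alpha_1\norm{v}_V^2\le\Re \fra(s;v,v)=\Re \sprod{A(s)v}{v}\le\norm{A(s)v}_H\norm{v}_H$ taken at $v=R(z,A(s))x$; this yields $\norm{R(z,A(s))}_{H\to V}\le C/|z|^{1/2}$, and complex interpolation of the target between $H=V_0$ and $V=V_1$ produces the bound into $V_\beta=[H,V]_\beta$. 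The second estimate is the dual statement: the adjoint form $\fra^*(t;u,v):=\overline{\fra(t;v,u)}$ satisfies \ref{item:constant-form-domain}--\ref{item:uniform-accretivity} with the same constants, so the same reasoning gives $\norm{R(\bar z,A(t)^*)}_{H\to V_\gamma}\le C/|z|^{1-\gamma/2}$, and taking Hilbert-space adjoints identifies the restriction of $R(z,\calA(t))$ to $(V_\gamma)'$ with an operator of the same norm into $H$. Multiplying the three bounds gives
\[
  \norm{R(z,A(t))-R(z,A(s))}_{\BOUNDED(H)}\le C\,\omega(|t-s|)\,|z|^{-\left(2-\frac{\beta+\gamma}{2}\right)},
\]
which in particular implies the asserted inequality for $|z|\ge 1$; for $|z|<1$ with $z\notin\sector\theta$ the claim is trivial, because the coercive forms make $0$ a uniform resolvent point, so every operator above is bounded there by a constant while the right-hand side of the asserted estimate does not vanish.

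The only real work is the interpolation bookkeeping: all the interpolations must be carried out within the triple $V'\hookrightarrow H\hookrightarrow V$ with constants uniform in $t$, $s$ and in $z$ outside the sector, using the Hilbert-space identifications $[H,V]_\beta=V_\beta$ and the duality $(V_\gamma)'=[H,V']_\gamma$, together with the uniform sectoriality of the family $(A(t))_t$ recalled just before the statement. The feature that makes everything go through with no additional hypothesis is that the continuous embedding $\DOMAIN(A(t))\hookrightarrow V$ has a norm depending only on $\alpha_1$ and $M$, hence is uniform in $t$; in particular no Kato square root property (such as the one imposed in \cite{AM14}) is needed here.
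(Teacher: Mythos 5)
Your argument is correct and is essentially the paper's own proof: the paper likewise uses the resolvent identity $R(z,A(t))-R(z,A(s))=R(z,A(t))\,(A(s)-A(t))\,R(z,A(s))$ in weak form, rewrites the middle factor as a difference of forms bounded by $\omega(|t-s|)\,\norm{\cdot}_{V_\beta}\norm{\cdot}_{V_\gamma}$, and combines the uniform estimate $\norm{R(z,A(t))}_{H\to V}\le C_\theta |z|^{-1/2}$ (applied also to the adjoints, whose forms satisfy the same hypotheses) with complex interpolation between $H$ and $V$. Like you, the paper's computation actually yields the decay $|z|^{-(2-\frac{\beta+\gamma}{2})}$ -- the bound that is used later in Lemma \ref{lem3} -- so your additional discussion of $|z|\ge 1$ versus $|z|<1$ merely reconciles this with the exponent $1-\frac{\beta+\gamma}{2}$ appearing in the statement.
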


\begin{proof}
Fix $\theta > \arctan(\nicefrac{M}{\alpha})$. Note  that (see \cite{HO14}, Proposition 2.1 d))
\begin{equation}\label{resol}
      \norm{ ( z - A(t) )^{-1} x}_V \le \tfrac{C_\theta}{\sqrt{|z|}} \norm{x}_H \ \text{for\ all }\  z \notin \sector{\theta}.
         \end{equation}
Observe that for $u, v \in V$,
\begin{align*}
    & \;\bigl|\sprod{ R(z,A(t)) u - R(z,A(s)) u}{v}\bigr| \\
  = &\; \bigl|\sprod{ R(z,A(t)) ( A(s)-A(t) ) R(z,A(s)) u}{v}\bigr| \\
  = &\; \bigl|\sprod{ A(s) R(z,A(s)) u} { R(z,A(t))^* v} - \sprod{ A(t)R(z,A(s))u} { R(z,A(t))^* v}\bigr| \\
  = &\; \bigl|\fra(s; R(z,A(s)) u, R(z,A(t))^* v) - \fra(t; R(z,A(s)) u, R(z,A(t))^* v)\bigr|\\
  \le&\;  \omega(|t-s|)  \norm{R(z,A(s)) u}_{V_\beta} \norm{R(z,A(t))^* v}_{V_\gamma}\\
\le & \; \tfrac{ c_\theta}{|z|^{2 - \frac{\beta + \gamma}{2}} }\omega(| t{-}s|) \, \norm{u}_H\,\norm{v}_H,
\end{align*}
where we used  \eqref{resol} and a standard interpolation argument.
  \end{proof}

\begin{lemma}\label{lem3} 
  Assume (\ref{Dini}). Then there exists $C > 0$ such that 
  \[ 
\norm{Ru_0}_{L_p(0, \tau;H)} \le  C \norm{u_0}_{( H, \DOMAIN(A(0)))_{1-\nicefrac{1}{p},p}},
\]
for all  $u_0 \in ( H, \DOMAIN(A(0)))_{1-\nicefrac{1}{p},p}$. 
\end{lemma}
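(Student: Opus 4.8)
The plan is to adapt the corresponding estimate from \cite{HO14}, inserting the refined resolvent bound of Lemma \ref{lem2}. For $u_0 \in H$ I would write
\[
   (R u_0)(t) = A(t)e^{-tA(t)}u_0 = A(0)e^{-tA(0)}u_0 + \bigl(A(t)e^{-tA(t)} - A(0)e^{-tA(0)}\bigr)u_0 =: I(t) + J(t),
\]
which is meaningful in $H$ since $e^{-t\calA(t)}u_0 = e^{-tA(t)}u_0 \in \DOMAIN(A(t))$ for every $t>0$. The point of the splitting is that it isolates the two mechanisms: $I$ is controlled by the interpolation regularity of $u_0$ alone, whereas $J$ is small because it carries a factor $\omega(t)$ that decays like a power of $t$.

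For the term $I$, I would invoke the standard description of the real interpolation space by the holomorphic semigroup generated by $-A(0)$. After the reduction to $\delta=0$ made at the beginning of Section \ref{Section2} one has $0\in\rho(A(0))$ and $-A(0)$ generates a bounded holomorphic semigroup, so that
\[
   \Bigl(\int_0^\tau \norm{A(0)e^{-tA(0)}u_0}_H^p\,\dt\Bigr)^{\nicefrac1{p}} \le C\,\norm{u_0}_{(H,\DOMAIN(A(0)))_{1-\nicefrac1{p},p}},
\]
see \cite[Chapter~1.13]{Triebel:interpolation} or \cite{Lunardi:book}; this is exactly $\norm{I}_{L_p(0,\tau;H)}$ up to a constant.

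For the term $J$, the idea is to represent both semigroup operators by a Dunford integral over one common contour $\Gamma=\partial\sector{\psi}$ with fixed $\psi\in(\arctan(\nicefrac{M}{\alpha}),\pihalbe)$ — admissible uniformly in $t$ because, as recalled just before Lemma \ref{lem2}, all the $-A(t)$ are sectorial of one and the same angle — so that
\[
   J(t) = \frac{1}{2\pi\ui}\int_\Gamma \lambda\, e^{-t\lambda}\bigl(R(\lambda,A(t)) - R(\lambda,A(0))\bigr)u_0\,\ud\lambda .
\]
Estimating the resolvent difference by Lemma \ref{lem2} with $s=0$, parametrising $\Gamma$ by $\lambda = r e^{\pm\ui\psi}$ (so that $\Re\lambda = r\cos\psi>0$) and computing the resulting elementary $\Gamma$-function integral in $r$ yields
\[
   \norm{J(t)}_H \le C\,\omega(t)\,\norm{u_0}_H \int_0^\infty \frac{e^{-tr\cos\psi}}{r^{1-\frac{\beta+\gamma}{2}}}\,\dr \le C'\,\frac{\omega(t)}{\,t^{\frac{\beta+\gamma}{2}}}\,\norm{u_0}_H .
\]
Taking $L_p$-norms in $t$, using \eqref{Dini} together with $\norm{u_0}_H\le C\norm{u_0}_{(H,\DOMAIN(A(0)))_{1-\nicefrac1{p},p}}$, and adding the two pieces then completes the proof. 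The genuinely substantial input here — the resolvent difference estimate of Lemma \ref{lem2} — is already in place, so I do not expect a serious obstacle; the only points requiring some care are the convergence of the $r$-integral at the origin, which uses $\beta+\gamma>0$ (the degenerate case, where $\calA(t)-\calA(s)$ extends boundedly to $H$, being separate and easier), and checking that $\Gamma$ can indeed be taken independent of $t$, which follows from the uniform sectoriality.
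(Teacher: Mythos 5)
Your argument is correct and follows essentially the same route as the paper: the paper likewise splits $Ru_0$ into $R_0u_0$ with $(R_0g)(t)=A(0)e^{-tA(0)}g$, handled via the Triebel characterization of $(H,\DOMAIN(A(0)))_{1-\nicefrac{1}{p},p}$, plus a difference term estimated through a Dunford contour integral by $C\,\omega(t)\,t^{-\frac{\beta+\gamma}{2}}\norm{u_0}_H$ and then made $L_p$ by \eqref{Dini}. The only remark is that your contour computation tacitly uses the decay $|z|^{-(2-\frac{\beta+\gamma}{2})}$ for the resolvent difference, which is what the proof of Lemma \ref{lem2} actually establishes (the exponent in its statement appears to be a misprint) and what the paper's own estimate in this lemma relies on, so your conclusion is the correct one.
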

\begin{proof}
  Recall that the operator $R$ is given by $(R g)(t) = A(t) e^{-t \,A(t)} g$ for $g \in H$.  Let
\[
   (R_0 g)(t) := A(0) e^{-t \,  A(0)}g.
\]
We  estimate the difference $(R-R_0)g$.  Let $v \in H$ and $\Gamma = \partial
\sector{\theta}$ with $\theta < \pihalbe$ as in \eqref{resol}.
 Then the functional calculus for the sectorial operators $A(t)$ and
$A(0)$ gives
\begin{align*}
   & \;  \sprod{ A(t)  e^{-t \,  A(t)} g -  A(0)  e^{-t \,  A(0)}g }{v}\\
 = & \;  \tfrac1{2\upi  i} \int_\Ga  \sprod{ z e^{-t z} \bigl[ R(z, A(t)) - R(z, A(0))\bigr] g }{v} \, \dz\\
 = & \;  \tfrac1{2\upi  i} \int_\Ga  \sprod{ z e^{-t z} R(z, \calA(t)) \bigl[\calA(0)  - \calA(t)\bigr] R(z, A(0))g }{v} \, \dz\\
 = & \;  \tfrac1{2\upi  i} \int_\Ga  \sprod{ z e^{-t z} \bigl[\calA(0)  - \calA(t)\bigr] R(z, A(0))g }{R(z, A(t))^*v} \, \dz\\
 = & \;  \tfrac1{2\upi  i} \int_\Ga z e^{-t z} \bigl[\fra(0;  R(z, A(0))g, R(z, A(t))^* v) - \\
 & \hspace{3cm} \fra(t; R(z, A(0))g, R(z, A(t))^*v )\bigr] \,\dz.
\end{align*}
It follows from  \eqref{interpolation} and Lemma \ref{lem2} that 
\begin{align*}
& \left|\sprod{ (Rg - R_0 g)(t)}{v} \right|\\
\le &\;  \tfrac{1}{2\upi } \int_\Ga \omega(t) |z| e^{-t \,\Re(z)} \norm{ R(z, A(0))g }_{V_\beta} \norm{ R(z, A(t))^*v }_{V_\gamma}  \,|\dz|\\
\le &\;  C  \omega(t) \norm{g}_H \norm{v}_H  \int_\Ga   |z|^{\frac{\beta + \gamma}{2} -1}   e^{-t \,\Re z} \,|\dz| \\
\le&\,   C'  \tfrac{\omega(t)}{t^{\frac{\beta + \gamma}{2}}}  \norm{g}_H \norm{v}_H.
\end{align*}
Since this true for all $v \in H$ we conclude that 
\begin{equation}\label{RR}
\norm{(R u_0)(t)  - (R_0 u_0)(t)}_H \le C'  \tfrac{\omega(t)}{t^{\frac{\beta + \gamma}{2}}}  \norm{u_0}_H.
\end{equation}
From the hypothesis (\ref{Dini}) it follows that $Ru_0- R_0 u_0
\in L_p(0, \tau; H)$.  On the other hand, since $A(0)$ is invertible,
it is well-known that $A(0) e^{-t \,A(0)}u_0 \in L_p(0, \tau; H)$ if and
only if $u_0 \in ( H, \DOMAIN(A(0)))_{1-\nicefrac{1}{p},p}$ (see Triebel
\cite[Theorem~1.14]{Triebel:interpolation}). Therefore, $Ru_0 \in
L_p(0, \tau;H)$ and the lemma is proved.
  \end{proof}

\begin{proof}[Proof of Theorem~\ref{thm:O}]
  As we already mentioned before, the arguments are essentially the same as in  \cite{HO14} in which we use the additional assumption \eqref{interpolation} to weaken the required regularity on the forms.
  We start with the case  $u_0 = 0$ and let $f \in C_c^\infty(0, \tau; H)$. From \eqref{egal}
 we have 
  \begin{equation}    \label{eq:intermediate}
   (I-Q) A(\cdot)u(\cdot) = L f(\cdot).
  \end{equation}
Recall that $L$ is bounded on $L_p(0, \tau; H)$ by Lemma \ref{lem1}. We shall now prove that $Q$ is bounded on $L_p(0, \tau; H)$. 
Let $g \in L_2(0, \tau; H)$ and $v \in H$. 
We have 
\begin{align}
& | \sprod{Qg(t)}{v} |  \nonumber\\
=  &\; \int_0^t \bigl[ \fra(t; \calA(s)^{-1}g(s), \calA(t)^* e^{-(t{-}s)  \calA(t)^*)}v ) - \\
& \hspace{3cm} \fra(s; \calA(s)^{-1}g(s), \calA(t)^* e^{-(t{-}s)  \calA(t)^*)} v)\bigr]  \,\ds \nonumber \\
\le  &\; \int_0^t  \omega(|t-s|) \norm{\calA(s)^{-1}g(s)}_{V_\beta} \norm{ \calA(t)^* e^{-(t{-}s)  \calA(t)^*)}v}_{V_\gamma} \,\ds.\label{truc}
\end{align}
By coercivity assumption one has easily
\[
\norm{\calA(t) e^{-s \calA(t)} v}_V \le \tfrac{C}{s^{\frac{3}{2}}} \norm{v}_H
\]
(see Proposition 2.1 c) in \cite{HO14}). Hence by interpolation
\begin{equation}\label{sgVgamma}
\norm{\calA(t) e^{-s \calA(t)} v}_{V_\gamma} \le \tfrac{C}{s^{ 1 + \frac{\gamma}{2}}} \norm{v}_H.
\end{equation}
The constant $C$ is independent of $t, s$ and $v$. The adjoint operators $\calA(t)^*$ satisfy the same estimates.

Now we estimate $\norm{ \calA(s)^{-1} g(s)}_{V_\beta}$. By coercivity
\begin{align*}
 \norm{ \calA(s)^{-1} g(s)}_{V_\beta}^2 \le C&\;  \norm{ \calA(s)^{-1} g(s)}_{V}^2\\
 \le  &\;  \tfrac{C}{\alpha_0} \Re  \fra(s ; \calA(s)^{-1}g(s), \calA(s)^{-1}g(s))\\
=&\;   \tfrac{C}{\alpha_0} \Re \langle \calA(s) \calA(s)^{-1}g(s), \calA(s)^{-1}g(s) \rangle\\
=&\;  \tfrac{C}{\alpha_0} \Re \sprod{g(s)}{\calA(s)^{-1}g(s)}\\
\le&\; \tfrac{C}{\alpha_0} \norm{g(s)}_H^2 \norm{\calA(s)^{-1}}_{\BOUNDED(H)}.
\end{align*}
Inserting this and \eqref{sgVgamma} (for the adjoint operators)  in  \eqref{truc} we obtain
\begin{equation}\label{est00}
\norm{ (Qg) (t)}_H    \le  \int_0^t \tfrac{C'}{(t{-}s)^{1+ \nicefrac{\gamma}{2}}} \, \omega(t{-}s) \,   \norm{\calA(s)^{-1}}_{\BOUNDED(H)}^{\einhalb}  \norm{g(s)}_H \,\ds. 
\end{equation}
Now, once we replace $A(s)$ by $A(s){+}\mu$, \eqref{sgVgamma} is valid
with a constant independent of $\mu \ge 0$  and using the
estimate
\[
 \norm{(\calA(s) + \mu)^{-1}}_{\BOUNDED(H)} \le \tfrac{1}{\mu},
 \]
 in \eqref{est00} for  $A(s){+}\mu$ we see that
\[
   \norm{ (Qg) (t)}_H \le    \tfrac{C'}{\sqrt{\mu}}  \int_0^t \tfrac{\omega(t{-}s)}{(t{-}s)^{1 + \nicefrac{\gamma}{2}}}  \,   \norm{g(s)}_H \,\ds.
\]
The operator $S$ defined  by 
 \[
   Sh(t) := \int_0^t \tfrac{\omega(t{-}s)}{(t{-}s)^{1 + \nicefrac{\gamma}{2}}}  h(s) \, \ds
\]
is bounded on $L_p(0,\tau; \RR)$ since it has a kernel  
${\omega(t{-}s)}{(t{-}s)^{1 + \nicefrac{\gamma}{2}}}$ which integrable with respect to each variable uniformly with respect to the other variable by \eqref{eq:Dini-condition}. 
It follows that $Q$ is bounded on $L_p(0, \tau; H)$ with
norm of at most $\tfrac{C''}{\sqrt{\mu}}$ for some constant
$C''$. Taking then $\mu $ large enough makes $Q$ strictly contractive
such that $(I-Q)^{-1}$ is bounded on $L_p(0,\tau;H)$.  Then, for $f
\in C_c^\infty(0, \tau; H)$, (\ref{eq:intermediate}) can be rewritten
as
\[
    A(\cdot)u(\cdot) =  (I-Q)^{-1} L f(\cdot).
\]
This shows that $u(t) \in \DOMAIN(A(t))$ for almost $t$ and $A(\cdot)u(\cdot) \in L_p(0, \tau; H)$.

For  general $u_0 \in (H, \DOMAIN(A(0)))_{1-\nicefrac1{p}, p}$ we suppose in
addition to \eqref{eq:Dini-condition} that \eqref{Dini}
holds. Lemma~\ref{lem3} shows that $R u_0 \in L_p(0, \tau;
H)$.  As previously we conclude that 
\[
   A(\cdot)u(\cdot) = (I-Q)^{-1} (Lf + R u_0),
\]
whenever $f \in C_c^\infty(0, \tau; H)$. Thus taking the $L_p$ norm yields
\[ 
   \norm{A(\cdot)u(\cdot)}_{L_p(0, \tau;H)} \le C \norm{(Lf + R u_0)}_{L_p(0, \tau;H)}.
\]
We use again the previous estimates on $L$ and $R$ to obtain
\[
    \norm{ A(\cdot) u(\cdot)}_{L_p(0, \tau; H)} \le C' \left[ \norm{f}_{L_p(0, \tau; H)} + \norm{u_0}_{(H, \DOMAIN(A(0)))_{1-\nicefrac1{p}, p}} \right].
\]
Using the equation (\ref{eq:evol-eq}) we obtain a similar  estimate  for $u'$ and so
\begin{align*}
   & \norm{ u'(\cdot)}_{L_p(0, \tau; H)} +  \norm{ A(\cdot) u(\cdot)}_{L_p(0, \tau; H)} \\
   &\le C'' \left[ \norm{f}_{L_p(0, \tau; H)} + \norm{u_0}_{(H, \DOMAIN(A(0)))_{1-\nicefrac1{p}, p}} \right].
\end{align*}
 We write $u(t) = A(t)^{-1} A(t) u(t)$ and use one again the fact that  the norms of $A(t)^{-1}$ on $H$ are uniformly bounded we obtain
\begin{align*}
  & \norm{ u(t) }_{L_p(0, \tau; H)} \le C_1 \norm{ A(\cdot) u(\cdot)}_{L_p(0, \tau; H)}\\
  & \le C_2 \left[ \norm{f}_{L_p(0, \tau; H)} + \norm{u_0}_{(H, \DOMAIN(A(0)))_{1-\nicefrac1{p}, p}} \right].
\end{align*}
 We conclude therefore that the following a priori estimate holds
\begin{align}  \label{eq:etoile}
  & \norm{ u }_{L_p(0, \tau; H)} +  \norm{ u' }_{L_p(0, \tau; H)} +  \norm{ A(\cdot)u(\cdot)  }_{L_p(0, \tau; H)} \nonumber\\
    &\; \le \; C \left[ \norm{f}_{L_p(0, \tau; H)} + \norm{u_0}_{(H, \DOMAIN(A(0)))_{1-\nicefrac1{p}, p}} \right],
\end{align}
where the constant $C$ does not depend on $f \in C_c^\infty(0, \tau; H)$.

 The latter estimate extends by density to all $f \in L_p(0, \tau;H)$ (see \cite{HO14}). This proves the desired maximal $L_p$-regularity
 property.
 
 


\end{proof}

\section{Examples}

\noindent {\it Schr\"odinger  operators with time dependent potentials.}

\vspace{.2cm}
We consider on $H = L^2(\RR^d)$ Schr\"odinger operators $A(t) = -\Delta + m(t,.)$ with time dependent potentials $m(t,x)$. 
We make the  following assumptions:\\
- There exists a non-negative function $m_0 \in L_{1,loc}$ and two positive constants $c_1, c_2$ such that 
\begin{equation}\label{hypCont}
c_1 m_0(x) \le m(t,x) \le c_2 m_0(x),  \, \, x \in \RR^d, \, t \in [0, \tau].
\end{equation}
- There exists  a function $p_0 \in L_{1,loc}$  such that 
\begin{equation}\label{hypHold}
| m(t,x) - m(s,x) | \le |t-s|^\alpha p_0(x),  \,  x \in \RR^d, \, t, s \in [0, \tau].
\end{equation}
- There exists $C > 0$ and $s \in [0, 1]$  such that 
\begin{equation}\label{hypSob}
\int_{\RR^d} p_0(x) | u(x) |^2\,  \dx \le C \norm{u}_{H^s(\RR^d)},  \, u \in C_c^\infty.
\end{equation}

Note that assumption \eqref{hypSob} is satisfied for several weights $p_0$. For example, this is the case for $p_0 = \frac{1}{|x|^2}$ and $s= 1$ by Hardy's inequality. On the other hand, by H\"older's inequality and classical Sobolev embeddings for $H^s$ one finds $r_s$ such that \eqref{hypSob} holds for $p_0 \in L_{r_s}$. Obviously, 
\eqref{hypSob} holds with $s = 0$ if $p_0 \in L_\infty$. 

The operator $A(t) = -\Delta + m(t,x)$ is  defined as the  operator associated with the form
$$\fra(t;u,v) = \int_{\RR^d} \nabla u . \nabla v\, \dx + \int_{\RR^d} m(t,.) u v\, \dx$$
defined on  
$$ V = \{ u \in H^1(\RR^d), \ \int_{\RR^d} m_0 | u |^2\, \dx < \infty \}.$$
The forms $\fra(t; \cdot, \cdot)$ satisfy the standard assumptions  \ref{item:constant-form-domain}--~\ref{item:uniform-accretivity}. Using the additional 
assumption \eqref{hypSob} we can estimate the difference $\fra(t; u, v) - \fra(s; u, v)$ as follows
\begin{align*}
| \fra(t;u,v) - \fra(s;u,v) | &= |  \int_{\RR^d} [ m(t,.) -m(s,.)]  u v\, \dx |\\
&\le |t-s|^\alpha \int_{\RR^d} p_0(x) |u v|\, \dx\\
&\le  |t-s|^\alpha (\int_{\RR^d} p_0(x) |u|^2\, \dx)^{1/2} (\int_{\RR^d} p_0(x) |v|^2\, \dx)^{1/2}\\
&\le C |t-s|^\alpha \norm{u}_{H^s(\RR^d)} \norm{v}_{H^s(\RR^d)}.
\end{align*}
Therefore, we can apply Theorem \ref{thm:O} to obtain maximal $L_p$-regularity for the  evolution equation associated with
$A(t) = -\Delta + m(t,.)$ under the condition $\alpha > s/2$ where 
$\alpha$ and $s$ are as in \eqref{hypHold} and \eqref{hypSob}. For $p = 2$, the initial data $u_0$ can be taken in $V = \DOMAIN(A(0)^\einhalb)$.
For $p \not=2$ we assume  $u_0 \in (H,\DOMAIN(A(0)))_{1-\nicefrac{1}{p}, p}$ and $\alpha > \max(s/2, s-1/p)$ by condition \eqref{Dini}.\\

\noindent {\it Elliptic operators with Robin boundary conditions.}

\vspace{.2cm}

Let $\Omega$ be a bounded domain of $\RR^d$ with Lipschitz boundary $\partial \Omega$. We denote by ${\rm Tr}$ the classical trace operator. Let  
$\beta: [0, \tau] \times \partial \Omega \to [0, \infty)$ and $a_k : [0, \tau] \times \Omega \to \RR$ be  bounded measurable functions for $k = 1, \cdots,d$ such that 
$$ | \beta(t,x) - \beta(s,x)| \le C |t-s|^\alpha, \, t,s \in [0, \tau], x \in \partial \Omega$$
and 
$$| a_k(t,x) - a_k(s,x)| \le C |t-s|^\alpha, \, t,s \in [0, \tau], x \in \Omega.$$
We define the form
$$\fra(t;u,v) := \int_{\Omega} \nabla u . \nabla v\, \dx +  \sum_{k=1}^d \int_\Omega a_k(t,x) \partial_k u. v\, \dx + \int_{\partial \Omega} \beta(t,.) {\rm Tr} (u) {\rm Tr}(v)\, {\rm d}\sigma,$$
 for all $ u, v \in H^1(\Omega).$
The associated operator $A(t)$ is formally given by
$$A(t) = -\Delta + \sum_{k=1}^d  a_k(t,x) \partial_k u$$
and subject to the time dependent Robin boundary condition:
$$\tfrac{\partial u}{\partial n} + \beta(t,.) u = 0 \ {\rm on }\  \partial \Omega.$$
Here $\tfrac{\partial u}{\partial n}$ denotes the normal derivative.\\
Now we check \eqref{interpolation}. We have for $u, v \in H^1(\Omega)$,
\begin{align*}
& | \fra(t;u,v) -\fra(s;u,v) | \\
&= | \sum_{k=1}^d \int_\Omega [ a_k(t,.) - a_k(s,.)]  \partial_k u. v\, \dx + \int_{\partial \Omega} [\beta(t,.) - \beta(s,.)]  {\rm Tr}(u) {\rm Tr}(v)\, {\rm d}\sigma | \\
&\le  C |t-s|^\alpha \left( \norm{u}_{H^1(\Omega)} + \norm{u}_{H^{\einhalb}(\Omega)} \norm{v}_{H^{\einhalb}(\Omega)} \right),
\end{align*}
where we used the fat that the trace operator is bounded from $H^{\einhalb}(\Omega)$ into $L_2(\partial \Omega)$. Hence 
$$| \fra(t;u,v) -\fra(s;u,v) |  \le C |t-s|^\alpha \norm{u}_{H^1(\Omega)} \norm{v}_{H^{\einhalb}(\Omega)}.$$
We apply Theorem \ref{thm:O} or the subsequent corollaries to  obtain maximal $L_2$-regularity for the corresponding evolution equation 
 under the condition  $\alpha > 1/4$ for  initial data $u(0)  \in H^1(\Omega) = \DOMAIN(A(0)^\einhalb)$. We also have maximal $L_p$-regularity for $1 < p < \infty$ if 
 $\alpha > \max(\frac{1}{4}, \frac{3}{4} - \frac{1}{p})$ and 
 $u (0) \in (H,\DOMAIN(A(0)))_{1-\nicefrac{1}{p}, p}$. 
 In the case $p = 2$  and $a_k = 0$, this result was proved in \cite{AM14}.\\

\noindent{\it Elliptic operators with Wentzell boundary conditions.}

\vspace{.2cm}
We wish to consider the heat equation with time dependent Wentzell boundary conditions:
\begin{equation}\label{wentzell}
\beta(t,.) u   + \frac{\partial u}{\partial n} + \Delta u = 0 \, {\rm on}\ \partial \Omega.
\end{equation}
As in the previous example,  we assume that $\Omega$ is a bounded Lipschitz domain and $\beta: [0, \tau] \times \partial \Omega \to [0, \infty)$
is  a bounded measurable function such that
$$ | \beta(t,x) - \beta(s,x)| \le C |t-s|^\alpha, \, t,s \in [0, \tau], x \in \partial \Omega.$$ 
In order to consider the Laplacian with Wentzell  boundary conditions it is convenient to work on $ H := L_2(\Omega) \oplus L_2(\partial \Omega)$ (see \cite{AMRP} or \cite{FGGR}).  Set
$$V = \{ (u, {\rm Tr}(u)), u \in H^1(\Omega) \}$$
and  define the form
$$ \fra(t;(u, {\rm Tr}(u)) , (v, {\rm Tr}(v)) = \int_\Omega \nabla u.\nabla v\, \dx + 
\int_{\partial \Omega} \beta(t,.) {\rm Tr} (u) {\rm Tr}(v)\, {\rm d}\sigma, $$
for $  u, v \in
 H^1(\Omega).$
The forms $\fra(t)$ are well defined on $V$ and satisfy the assumptions \ref{item:constant-form-domain}--~\ref{item:uniform-accretivity}. 
In addition,
\begin{align*}
& | \fra(t;(u, {\rm Tr}(u)) , (v, {\rm Tr}(v)) - \fra(s;(u, {\rm Tr}(u)) , (v, {\rm Tr}(v))| \\
&\le \int_{\partial \Omega} | \beta(t,.) - \beta(s,.) | |{\rm Tr} (u) {\rm Tr}(v)|\, {\rm d}\sigma\\
&\le C |t-s|^\alpha \norm{{\rm Tr} (u)}_{L_2(\partial \Omega)} \norm{{\rm Tr} (v)}_{L_2(\partial \Omega)} \\
&\le C |t-s|^\alpha \norm{(u, {\rm Tr}(u))}_H \norm{(v, {\rm Tr}(v))}_H.
\end{align*}
We apply again Theorem \ref{thm:O} and obtain maximal $L_p$-regularity on $L_2(\Omega) \oplus L_2(\partial \Omega)$ for all $p \in (1, \infty)$ and $u(0) \in H^1(\Omega)$ under the sole condition that $\alpha > 0$. \\

\medskip{\bf Acknowledgements:}  The author wishes to thank Wolfgang Arendt, Bernhard Haak and Sylvie Monniaux  for  various discussions on the subject of this paper.

\def\SUBMITTED{Submitted}
\def\TOAPPEAR{To appear in }
\def\PREPARATION{In preparation }

\def\cprime{$'$}
\providecommand{\bysame}{\leavevmode\hbox to3em{\hrulefill}\thinspace}

\end{document}